\newtheorem{theorem}{Theorem}[section]
\newtheorem{lemma}{Lemma}[section]
\newtheorem{pro}{Proposition}[section]
\newtheorem{corollary}{Corollary}[section]
\theoremstyle{remark}
\theoremstyle{remark}
\newtheorem{defi}{Definition}[section]
\providecommand{\norm}[1]{\lVert#1\rVert}
\DeclareMathOperator*{\tr}{trace}
\newcommand{\f}[2]{\frac{#1}{#2}}
\newcommand{\h}{\sigma}
\newcommand{\co}{\nabla}
\newcommand{\ff}{\rho}
\newcommand{\di}{\mathcal{D}}
\newcommand{\bco}{\widetilde{\nabla}}
\newcommand{\p}{\mathbf{P}}
\newcommand{\m}{{M_1 }_{\rho_2}\!\!\times_{\rho_1}M_2}
\begin{document}
\title[On doubly warped product submanifolds of Generalized ...]{On doubly warped product submanifolds of Generalized $(\kappa,\mu)$-Space Forms }

\author{Morteza Faghfouri \and Narges Ghaffarzadeh }
\address{Faculty of mathematics, University of Tabriz, Tabriz, Iran}
\email{faghfouri@tabrizu.ac.ir}
\email{nargesghafarzade@yahoo.com}

\subjclass[2010]{53C40, 53C25, 53C15.}
\keywords{ doubly warped product, C-totally real submanifold, geometric inequality, eigenfunction of the Laplacian operator, $(\kappa,\mu)$-space forms.}
%
%\date{}
%
\begin{abstract}
In this paper we establish a general inequality involving the Laplacian of the warping functions and the squared mean curvature of any doubly warped product isometrically immersed in a Riemannian manifold. Moreover,  we obtain  some  geometric  inequalities for C-totally real doubly warped product submanifolds of generalized $(\kappa,\mu)$-space forms.
 \end{abstract}
\maketitle
\section{Introduction}
Bishop and O'Neill \cite{bishop.oneill:} introduced the concept of warped products to study manifolds of negative sectional  curvature.
 O'Niell  discussed warped products and explored curvature formulas of warped products in terms of curvatures of  components of warped products. Moreover, he studied Robertson-Walker, static, Schwarschild and Kruskal space-times as warped products in \cite{oneil:book}.
%B. Y. Chen  and many authors \cite{chen:on.isometric.minimal.immersions,chen:warpedproductsinrealspaceforms,matsumoto:Warpedproductsubmanifolds,nolker:Isometric.immersions.of.warped.products} studied the fundamental geometric  properties of warped product spaces.
Doubly warped products can be considered as a generalization of singly warped products which were mainly studied in \cite{unal:doubly.warped.products.th,unal:doubly.warped.products}. A. Olteanu \cite{olteanu:generalinequalityfordoublywarped},  S.~Sular and C.~{\"O}zg{\"u}r \cite{sular.ozgur:Doublywarpedproduct}, K.~Matsumoto \cite{matsumoto:doublywarpedsubmanifold} and  M.~Faghfouri and A.~Majidi in \cite{faghfouri:ondoublyimmersion}  extended  some properties of warped product submanifolds  and geometric inequalities  in warped product manifolds  for doubly warped product submanifolds into Riemannian manifolds.
 %In this paper,  we extend  some properties of warped product immersion obtained in \cite{chen:onwarpedimmersions}, geometric inequality and minimal immersion problem studied in \cite{chen:on.isometric.minimal.immersions,olteanu:generalinequalityfordoublywarped} to doubly warped product.

M. M. Tripathi \cite{tripathi:c-totallyrealwarped} studied
the relationship between the Laplacian of the
warping function $\ff$ and the squared mean curvature of a warped product $M=M_1\times_{\ff} M_2$ isometrically immersed
in a Riemannian manifold $\~M$ given by
\begin{align}\label{eq:chen-tripathineq0}
n_2\dfrac{\Delta_1\rho}{\rho}\leq\f{n^2}{4}\norm{H}^2+\tilde{\tau}(T_pM)-\tilde{\tau}(T_pM_1)-\tilde{\tau}(T_pM_2),
\end{align}
where $n_i = \dim M_i, i = 1, 2$, and $\Delta_1$ is the Laplacian operator of $M_1$.
Moreover, the equality case of \eqref{eq:chen-tripathineq0} holds  if and only if $\mathbf{x}$ is
a mixed totally geodesic immersion and $n_1H_1 = n_2H_2$, where $H_i, i = 1, 2$,
are the partial mean curvature vectors.
He also in Theorem 4.2 and Theorem 4.4 establish an inequality for C-totally real warped product
 submanifolds of $(\kappa,\mu)$-space forms and non-Sasakian
$(\kappa,\mu)$-manifolds.

 S.~Sular and C.~{\"O}zg{\"u}r \cite{sular.ozgur:Doublywarpedproduct} improved  Theorem 4.2 and Theorem 4.4 of M. M. Tripathi \cite{tripathi:c-totallyrealwarped} for C-totally real doubly warped product
 submanifolds of $(\kappa,\mu)$-space forms and non-Sasakian
$(\kappa,\mu)$-manifolds.

In \cite{carriazo.Tripathi:Generalizedspaceforms} A.~Carriazo, V.~Mart{\'{\i}}n Molina and M. M. Tripathi introduce generalized $(\kappa,\mu)$-space forms as an almost contact metric manifold $(\~M,\phi,\xi,\eta,g)$ whose
curvature tensor can be written as
\begin{align*}
R=f_1R_1+f_2R_2+f_3R_3+f_4R_4+f_5R_5+f_6R_6,
\end{align*}
where $f_1, f_2, f_3,f_4,f_5,f_6$  are differentiable functions on $\~M$, and $R_1, R_2, R_3, R_4, R_5, R_6$ are the tensors
defined by
\begin{align*}
&R_1(X,Y)Z=g(Y,Z)X-g(X,Z)Y,\\
&R_2(X,Y)Z=g(X,\phi Z)\phi Y-g(Y,\phi Z)\phi X+2g(X,\phi Y)\phi Z,\\
&R_3(X,Y)Z=\eta(X)\eta(Z)Y-\eta(Y)\eta(Z)X+g(X,Z)\eta(Y)\xi-g(Y,Z)\eta(X)\xi,\\%\displaybreak\\
&R_4(X,Y)Z=g(Y,Z)hX-g(X,Z)hY+g(hY,Z)X-g(hX,Z)Y,\\
&R_5(X,Y)Z=g(hY,Z)hX-g(hX,Z)hY+g(\phi hX, Z)\phi hY-g(\phi hY,Z)\phi hX,\\
&R_6(X,Y)Z=\eta(X)\eta(Z)hY-\eta(Y)\eta(Z)hX+g(hX,Z)\eta(Y)\xi-g(hY,Z)\eta(X)\xi.
\end{align*}
for  vector fields $X, Y, Z$ on $\~M$. In \cite{carriazo:Generalizeddivided} A. Carriazo and V. Mart{\'{\i}}n-Molina defined generalized $(\kappa,\mu)$-space forms with divided the tensor field $R_5$ into two parts
 \begin{align*}
&R_{5,1}(X,Y)Z=g(hY,Z)hX-g(hX,Z)hY,\\
&R_{5,2}(X,Y)Z=g(\phi hY, Z)\phi hX-g(\phi hX,Z)\phi hY.
\end{align*}
It follows that $R_5 = R_{5,1}-R_{5,2}.$ They called  an almost contact metric manifold $(\~M,\phi,\xi,\eta,g)$, generalized $(\kappa,\mu)$-space forms with divided $R_5$ whenever the curvature tensor  can be written as
\begin{align*}
R=f_1R_1+f_2R_2+f_3R_3+f_4R_4+f_{5,1}R_{5,1}+f_{5,2}R_{5,2}+f_6R_6,
\end{align*}
where $f_1, f_2, f_3,f_4,f_{5,1}, f_{5,2},f_6$  are differentiable functions on $\~M$. Obviously, any  generalized Sasakian $(\kappa,\mu)$-space form is  a generalized Sasakian $(\kappa,\mu)$-space form  with divided $R_5.$

In section \ref{Inequality for doubly warped product}, we improved inequality \eqref{eq:chen-tripathineq0} for a doubly warped product isometrically
immersed in a Riemannian manifold. Section \ref{generalized} contains
some necessary background of contact geometry including the concepts of
Sasakian manifolds,  $(\kappa,\mu)$-space forms, generalized $(\kappa,\mu)$-space forms, generalized $(\kappa,\mu)$-space forms with divided $R_5$ and C-totally real submanifold. So we establish a similar inequality for C-totally real doubly warped product
submanifolds in a generalized $(\kappa,\mu)$-space forms with divided $R_5$.
\section{Preliminaries }
Let $M_1$ and $M_2$ be  two Riemannian manifolds equipped with Riemannian metrics $g_1$ and $g_2$, respectively, and let $\ff_1$ and $\ff_2$  be  positive differentiable functions on $M_1$ and $M_2$, respectively.  The doubly warped product ${M_1 }_{\ff_2}\!\!\times_{\ff_1}M_2$ is defined to be the product manifold $M_1\times M_2$ equipped with the Riemannian metric given by $$g=(\ff_2o\pi_2)^2\pi_1^*(g_1)+(\ff_1o\pi_1)^2\pi_2^*(g_2)$$ where $\pi_i:M_1\times M_2 \to M_i$ are the natural projections. We denote the dimension of $M_1$ and $M_2$ by $n_1$ and $n_2$, respectively. In particular, if $\ff_2=1$ then ${M_1 }_{1}\!\!\times_{\ff_1}M_2={M_1 }\times_{\ff_1}M_2$ is called warped product of $(M_1,g_1)$ and $(M_2,g_2)$ with warping function $\ff_1$.

For a vector field $X$ on $M_1$, the lift of $X$  to ${M_1 }_{\ff_2}\!\!\times_{\ff_1}M_2$ is the vector field $\tilde{X}$ whose value at each $(p,q)$ is the lift $X_p$ to $(p,q)$. Thus the lift of $X$ is the unique vector field on ${M_1 }_{\ff_2}\!\!\times_{\ff_1}M_2$ that is $\pi_1$-related to $X$ and $\pi_2$-related to the zero vector field on $M_2$.
For a doubly warped product ${M_1 }_{\ff_2}\!\!\times_{\ff_1}M_2$, let $\di_i$ denote the distribution obtained from the vectors tangent to the horizontal lifts of $M_i$.

Let $M$ be an n-dimensional Riemannian manifold equipped with a Riemannian
metric $g$.
Let $\{e_1,\ldots, e_n\}$ be any orthonormal basis for $T_pM$. The scalar curvature
$\tau(p)$ of $M$ at $p$ is defined by
\begin{align}
\tau(p)=\sum_{1\leq i<j\leq n} K(e_i\wedge e_j),
\end{align}
where $K(e_i\wedge e_j)$ is the sectional curvature of the plane section spanned by $e_i$ and $e_j$ at $p\in M$.

Let $\mathbf{P}_k$ be a $k$-plane section of $T_pM$ and $\{e_1,\ldots , e_k\}$
any orthonormal basis of $\mathbf{P}_k$. The scalar curvature $\tau(\mathbf{P}_k)$ of $\mathbf{P}_k$ is given by
\begin{align}
\tau(\mathbf{P}_k)=\sum_{1\leq i<j\leq k}K(e_i\wedge e_j).
\end{align}
The scalar curvature $\tau(p)$ of $M$ at $p$ is identical with the scalar curvature of
the tangent space $T_pM$ of $M$ at $p$, that is, $ \tau(p)=\tau(T_pM)$.

Let $\mathbf{x}:M\to \~M$ be an isometric immersion of a Riemannian manifold $M$ into a Riemannian manifold $\~M$. The formulas of Gauss and Weingarten are given respectively by
\begin{align}
\widetilde{\co}_XY&=\co_XY+\h(X,Y),\label{eq:gauss}\\
\bco_X\zeta&=-A_\zeta X+D_X\zeta,\label{eq:win}
\end{align}
for all vector fields $X,Y$ tangent to $M$ and $\zeta$ normal to $M$, where $\bco$ denotes the Levi-Civita connection on $\~M$, $\h$ the second fundamental form, $D$ the normal connection, and $A$ the shape operator of $\mathbf{x}:M\to \~M$. The second fundamental form and the shape operator are related by $\langle A_\zeta X,Y\rangle=\langle \h(X,Y),\zeta\rangle$, where $\langle,\rangle$ denotes the inner product on $\~M$.

The equation of Gauss of $\mathbf{x}:M\to \~M$ is given by
\begin{align}
\langle \widetilde{R}(X,Y)Z,W\rangle=\langle R(X,Y)Z,W\rangle+\langle \h(X,Z),\h(Y,W)\rangle\nonumber\\
\qquad-\langle \h(X,W),\h(Y,Z)\rangle,\label{eq:gauss curvature}
\end{align}
for $X, Y, Z, W\in\Gamma(TM)$.

If a Riemannian manifold $\~M$ is of constant curvature
$c$, we have
 \begin{align}
\langle R(X,Y)Z,W\rangle&=c\{\langle Y,Z \rangle\langle X,W\rangle-\langle X,Z \rangle\langle Y,W\rangle\}\label{eq:curvature:cons}\\
&\qquad +\langle \h(Y,Z), \h(X,W)\rangle-\langle \h(X,Z), \h(Y,W)\rangle.\nonumber
\end{align}

The mean curvature vector ${H}$ is defined by
${H}=\frac{1}{n}\text{trace }\h$.
%For a normal vector field $\zeta$ on $M$, if $A_\zeta=\lambda I$ for some $\lambda\in C^\infty(N)$, then
%$\zeta$ is called an umbilical section, or $M$ is said to be umbilical with respect
%to $\zeta$. If the submanifold $M$ is umbilical with respect to every local normal
%vector field, then $M$ is called a totally umbilical submanifold.
An isometric immersion $\mathbf{x}:M\to \~M$ is called minimal immersion in $\~M$ if the mean curvature vector
vanishes identically.
%Let $\phi:N\to M$ be an isometric immersion and $f\in C^\infty(M)$. We denote by $\co f$ and $D f$ the gradient of $f$ and the normal component of $\co f$ restricted on $N$, respectively.
Let $\psi$ be a smooth  function on a Riemannian n-manifold $M$. Then  the Hessian tensor field of $\psi$ is given by
\begin{align}\label{eq:hessian}
H^\psi(X,Y)=XY\psi-(\co_XY)\psi
\end{align}
and
the Laplacian of $\psi$ is given by
\begin{align}\label{eq:laplac}
\Delta\psi=-\text{trace }(H^\psi)=\sum_{i=1}^{n}((\co_{e_i}e_i)\psi-e_ie_i\psi).
\end{align}
We state the following  Lemmas for later uses.
\begin{lemma}[\cite{chen:2011pseudo}]\label{lemma:mini}
Every  minimal submanifold $M$ in a Euclidean space $\mathbb{R}^m$
is non-compact.
\end{lemma}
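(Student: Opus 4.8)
The plan is to argue by contradiction: assume $M$ is a \emph{compact} minimal submanifold of $\mathbb{R}^m$ and show that the immersion must then be constant, which is absurd.

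First I would record Beltrami's formula for the position vector. Write $\mathbf{x}\colon M\to\mathbb{R}^m$ for the immersion, regarded as an $\mathbb{R}^m$-valued function, and let $\bco$ be the flat connection on $\mathbb{R}^m$. Then $\bco_X\mathbf{x}=X$ for every $X\in\Gamma(TM)$, so applying the Gauss formula \eqref{eq:gauss} and the definition \eqref{eq:hessian} of the (vector-valued) Hessian gives $H^{\mathbf{x}}(X,Y)=\bco_X(\bco_Y\mathbf{x})-\bco_{\co_XY}\mathbf{x}=\bco_XY-\co_XY=\h(X,Y)$. Taking the trace over an orthonormal frame of $T_pM$ and using \eqref{eq:laplac} together with $H=\tfrac1n\tr\h$ yields $\Delta\mathbf{x}=-\tr H^{\mathbf{x}}=-n H$, i.e.\ componentwise $\Delta x^A=-n H^A$ for the Euclidean coordinate functions $x^1,\dots,x^m$ of $\mathbf{x}$.

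Next, minimality gives $H\equiv 0$, so each coordinate function $x^A$ is harmonic on $M$, i.e.\ $\Delta x^A=0$. Since $M$ is compact without boundary, Green's identity (recall that $\Delta=-\tr H^{(\cdot)}$ is here the \emph{nonnegative} Laplacian, so $\int_M\psi\,\Delta\psi\,dV=\int_M\abs{\grad\psi}^2\,dV$) applied with $\psi=x^A$ gives $\int_M\abs{\grad x^A}^2\,dV=\int_M x^A\,\Delta x^A\,dV=0$. Hence $\grad x^A=0$, so each $x^A$ is constant on $M$, and therefore $\mathbf{x}$ is a constant map. This contradicts the fact that $\mathbf{x}$ is an isometric immersion of a manifold of dimension $n\ge 1$, whose differential is injective and in particular nonzero. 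Consequently no compact minimal submanifold of $\mathbb{R}^m$ can exist.

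There is no real obstacle in this argument; the only points demanding care are keeping track of the sign convention for $\Delta$ fixed in \eqref{eq:laplac} and deriving the identity $H^{\mathbf{x}}(X,Y)=\h(X,Y)$ cleanly from the Gauss formula \eqref{eq:gauss}. Once these are in place, compactness finishes the proof via integration by parts (equivalently, via the maximum principle for harmonic functions).
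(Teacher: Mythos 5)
Your argument is correct: the identity $H^{\mathbf{x}}(X,Y)=\h(X,Y)$ follows cleanly from the Gauss formula, so with the sign convention of \eqref{eq:laplac} one gets $\Delta\mathbf{x}=-nH$, and minimality plus compactness (via Green's identity, or equivalently the paper's Lemma \ref{lemma:hopf1} applied to each coordinate function) forces $\mathbf{x}$ to be constant, a contradiction. The paper itself only cites this lemma without proof, but your Beltrami-formula argument is exactly the standard proof in the cited source, so there is nothing to add.
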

\begin{lemma}[\cite{chen:2011pseudo}]\label{lemma:hopf1}
Every harmonic function on a compact Riemannian manifold
is  constant.
\end{lemma}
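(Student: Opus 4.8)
The plan is to deduce the statement from Green's first identity together with the divergence theorem, exploiting the sign convention for the Laplacian fixed in \eqref{eq:laplac}. Let $\psi$ be a harmonic function on the compact Riemannian manifold $M$, i.e. $\Delta\psi=0$ where $\Delta=-\operatorname{div}\circ\grad$ as in \eqref{eq:laplac}. First I would record the pointwise Leibniz identity
\begin{align*}
\operatorname{div}(\psi\,\grad\psi)=\norm{\grad\psi}^2-\psi\,\Delta\psi ,
\end{align*}
which holds on all of $M$ and uses only the product rule for the divergence and the definition of $\Delta$.

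Next, since $M$ is compact and has no boundary, integrating this identity over $M$ and applying the divergence theorem kills the left-hand side, giving
\begin{align*}
\int_M\norm{\grad\psi}^2\,dV=\int_M\psi\,\Delta\psi\,dV=0 ,
\end{align*}
where the last equality uses $\Delta\psi=0$. Because the integrand $\norm{\grad\psi}^2$ is continuous and nonnegative, this forces $\grad\psi\equiv 0$ throughout $M$, hence $\psi$ is locally constant; as $M$ is connected (the standing convention for the manifolds considered here), $\psi$ is constant on $M$.

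There is essentially no serious obstacle: the only points needing a word of care are the invocation of the divergence theorem (which requires $M$ to be closed, so that no boundary term appears) and, if one prefers not to assume connectedness, replacing the conclusion by ``constant on each connected component''. As a remark one could instead argue via Hopf's strong maximum principle: $\psi$ attains its maximum on the compact set $M$, and for the equation $\Delta\psi=0$ the maximum set is then both open and closed, hence all of $M$. I would mention this alternative but carry out the Green-identity argument, since it is shorter and entirely self-contained given \eqref{eq:laplac}.
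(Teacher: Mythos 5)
Your argument is correct. Note first that the paper itself offers no proof to compare against: this lemma is simply quoted from \cite{chen:2011pseudo}, so any complete proof is already more than the paper supplies. Your Green-identity computation is the standard one, and you have handled the two delicate points properly: the sign convention $\Delta=-\operatorname{div}\circ\grad$ fixed in \eqref{eq:laplac} is respected in the identity $\operatorname{div}(\psi\,\grad\psi)=\norm{\grad\psi}^2-\psi\,\Delta\psi$, and the appeal to the divergence theorem is legitimate because ``compact Riemannian manifold'' here means closed (no boundary term). The connectedness caveat is the right one to flag; the conclusion is otherwise ``locally constant.'' One remark worth adding: within the paper's own toolkit the statement is an immediate special case of Lemma~\ref{lemma:hopf2} (Hopf's lemma), since a harmonic $\psi$ trivially satisfies $\Delta\psi\geq 0$ everywhere; that route is shortest if one is willing to take Hopf's lemma as given, whereas your integration-by-parts argument is self-contained and proves the statement from scratch. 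Either is acceptable; what you wrote stands.
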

\begin{lemma}[Hopf's lemma. in \cite{chen:2011pseudo} ]\label{lemma:hopf2}
Let $M$ be a compact Riemannian manifold. If $\psi$ is a
differentiable function on $M$ such that $\Delta\psi \geq0$  everywhere on $M$ (or $\Delta\psi \leq0$
everywhere on $M$), then $\psi$ is a constant function.
\end{lemma}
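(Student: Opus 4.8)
The plan is to reduce the statement to Lemma~\ref{lemma:hopf1} by means of the divergence theorem. First I would record the normalization fixed in \eqref{eq:laplac}, namely $\Delta\psi=-\operatorname{div}(\grad\psi)$; since $M$ is compact (and, as is standard in this context, without boundary), the divergence theorem gives
\begin{align}\label{eq:propintzero}
\int_M \Delta\psi \, dV \;=\; -\int_M \operatorname{div}(\grad\psi)\, dV \;=\; 0
\end{align}
for every sufficiently smooth function $\psi$ on $M$.

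Next, suppose $\Delta\psi\geq 0$ everywhere on $M$. The integrand in \eqref{eq:propintzero} is then a non-negative continuous function whose integral over the compact manifold $M$ vanishes, so it must be identically zero: $\Delta\psi\equiv 0$. Hence $\psi$ is harmonic, and Lemma~\ref{lemma:hopf1} forces $\psi$ to be constant. The remaining case $\Delta\psi\leq 0$ everywhere follows immediately by applying this conclusion to $-\psi$, since $\Delta(-\psi)=-\Delta\psi\geq 0$.

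I do not expect a genuine obstacle here: the only points needing care are the sign convention for $\Delta$ (so that the hypothesis $\Delta\psi\geq 0$ is read consistently with \eqref{eq:laplac}) and the tacit assumption that ``compact'' means closed, so that no boundary term appears in \eqref{eq:propintzero}. If one wishes to make the argument self-contained without citing Lemma~\ref{lemma:hopf1}, one may instead note that once $\Delta\psi\equiv 0$, integration by parts yields $\int_M \abs{\grad\psi}^2\, dV=\int_M \psi\,\Delta\psi\, dV=0$, whence $\grad\psi\equiv 0$ and $\psi$ is constant on the connected manifold $M$.
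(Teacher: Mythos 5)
Your argument is correct. The paper does not actually prove this lemma---it is quoted verbatim from Chen's book \cite{chen:2011pseudo}---so there is no internal proof to compare against; your route (integrate $\Delta\psi$ over the closed manifold to get $0$ by the divergence theorem, conclude $\Delta\psi\equiv 0$ from the sign hypothesis, then invoke Lemma~\ref{lemma:hopf1} or the identity $\int_M \abs{\grad\psi}^2\,dV=\int_M\psi\,\Delta\psi\,dV=0$) is the standard proof and is consistent with the sign convention $\Delta\psi=-\operatorname{div}(\grad\psi)$ fixed in \eqref{eq:laplac}. The only caveats you already flag correctly: $M$ must be closed (no boundary term) and connected for the final constancy conclusion.
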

\begin{lemma}[B. Y. Chen \cite{chen:Somepinching}]
Let $l\geq2$ and $a_1,\ldots, a_l, b$ be real numbers such that
\begin{align}
\left(\sum_{i=1}^la_i\right)^2=(l-1)\left(\sum_{i=1}^la_i^2+b\right).
\end{align}
Then $2a_1a_2\geq b$, with equality holding if and only if $a_1+a_2=a_3=\cdots=a_l.$
\end{lemma}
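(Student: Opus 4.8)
The plan is to reduce the statement to the Cauchy--Schwarz inequality applied to the ``tail'' $a_3,\dots,a_l$, followed by a single completion of squares. First I would put $x=a_1+a_2$ and $y=a_3+\dots+a_l$, so that $\sum_{i=1}^l a_i=x+y$ and $a_1^2+a_2^2=x^2-2a_1a_2$. The case $l=2$ is immediate: the hypothesis then reads $(a_1+a_2)^2=a_1^2+a_2^2+b$, i.e. $2a_1a_2=b$, while the asserted equality condition is vacuous; so from here on I assume $l\ge 3$.

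Next I would apply Cauchy--Schwarz to the $l-2$ numbers $a_3,\dots,a_l$, giving $y^2\le (l-2)(a_3^2+\dots+a_l^2)$, hence $a_3^2+\dots+a_l^2\ge y^2/(l-2)$. Substituting $a_1^2+a_2^2=x^2-2a_1a_2$ and this lower bound into the hypothesis
\[
(x+y)^2=(l-1)\bigl(a_1^2+a_2^2+a_3^2+\dots+a_l^2+b\bigr)
\]
and rearranging, I would obtain
\[
(l-1)\,(2a_1a_2-b)\ \ge\ (l-1)x^2+\tfrac{l-1}{l-2}\,y^2-(x+y)^2 .
\]

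The key algebraic point is that the right-hand side is a perfect square. Expanding,
\[
(l-1)x^2+\tfrac{l-1}{l-2}\,y^2-(x+y)^2=(l-2)x^2-2xy+\tfrac{1}{l-2}\,y^2=\Bigl(\sqrt{l-2}\,x-\tfrac{1}{\sqrt{l-2}}\,y\Bigr)^2\ge 0 ,
\]
and since $l-1>0$ we conclude $2a_1a_2\ge b$. For the equality discussion I would trace the argument back: equality forces equality in Cauchy--Schwarz, so $a_3=a_4=\dots=a_l$, and vanishing of the square, so $(l-2)x=y$, i.e. $(l-2)(a_1+a_2)=a_3+\dots+a_l=(l-2)a_3$; together these give $a_1+a_2=a_3=\dots=a_l$, and the converse is an easy substitution check.

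I do not anticipate any real obstacle here; the only care needed is the direction of the inequality when dividing by $l-1$ and the handling of the degenerate range (the case $l=2$ separately, and noting that for $l=3$ the tail is a single term so Cauchy--Schwarz is automatically an equality), so that the equality characterization comes out exactly as stated.
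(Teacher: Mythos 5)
Your argument is correct: the case split at $l=2$, the Cauchy--Schwarz bound $y^2\le (l-2)(a_3^2+\dots+a_l^2)$, the identity $(l-1)x^2+\tfrac{l-1}{l-2}y^2-(x+y)^2=\bigl(\sqrt{l-2}\,x-\tfrac{1}{\sqrt{l-2}}\,y\bigr)^2$, and the equality analysis (both nonnegative terms must vanish, forcing $a_3=\dots=a_l$ and $(l-2)x=y$, hence $a_1+a_2=a_3=\dots=a_l$) all check out. Note, however, that the paper does not prove this lemma at all; it is quoted from Chen's paper with a citation, so there is no in-text proof to compare against. Chen's own argument is shorter than yours: apply Cauchy--Schwarz once to the $l-1$ numbers $a_1+a_2,\,a_3,\dots,a_l$, giving
\begin{align*}
\Bigl(\sum_{i=1}^l a_i\Bigr)^2 \le (l-1)\bigl((a_1+a_2)^2+a_3^2+\dots+a_l^2\bigr),
\end{align*}
and compare with the hypothesis to get $a_1^2+a_2^2+b\le (a_1+a_2)^2$, i.e.\ $b\le 2a_1a_2$, with equality precisely when $a_1+a_2=a_3=\dots=a_l$. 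Your version trades this single application for a Cauchy--Schwarz on the $l-2$ tail terms plus a completion of squares; it costs a little more algebra (and the separate remark about $l=3$) but arrives at the same equality characterization, so the difference is one of economy rather than substance.
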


\section{Inequality for doubly warped products}\label{Inequality for doubly warped product}
\begin{pro}
Let $\mathbf{x}$ be an isometric immersion of an n-dimensional doubly warped product manifold $M=\m$ into an m-dimensional Riemannian manifold $\tilde{M}$. Then
\begin{align}\label{eq:chen-tripathineq}
n_2\dfrac{\Delta_1\rho_1}{\rho_1}+n_1\dfrac{\Delta_2\ff_2}{\ff_2}\leq\f{n^2}{4}\norm{H}^2+\tilde{\tau}(T_pM)-\tilde{\tau}(T_pM_1)-\tilde{\tau}(T_pM_2)
\end{align}
where $n_i = \dim M_i, i = 1, 2$, and $\Delta_i$ is the Laplacian operator of $M_i$.
Moreover, the equality case of \eqref{eq:chen-tripathineq} holds identically if and only if $\mathbf{x}$ is
a mixed totally geodesic immersion and $n_1H_1 = n_2H_2$, where $H_i, i = 1, 2$,
are the partial mean curvature vectors.
\end{pro}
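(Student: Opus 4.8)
The plan is to adapt to the doubly warped setting the technique of B.-Y.\ Chen that underlies Tripathi's inequality \eqref{eq:chen-tripathineq0}. Fix $p=(p_1,p_2)\in M$ and pick an orthonormal basis $\{e_1,\dots ,e_n\}$ of $T_pM$ adapted to the canonical splitting $T_pM=\di_1\oplus\di_2$, with $e_1,\dots ,e_{n_1}\in\di_1$ and $e_{n_1+1},\dots ,e_n\in\di_2$; extend it by an orthonormal normal frame $\{e_{n+1},\dots ,e_m\}$, chosen at points where $H\neq 0$ so that $e_{n+1}$ is parallel to $H$, and write $h^r_{ij}=\langle\h(e_i,e_j),e_r\rangle$. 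The quantity $\tilde\tau(T_pM)-\tilde\tau(T_pM_1)-\tilde\tau(T_pM_2)$ is exactly the sum $\sum_{i\le n_1<j}\tilde K(e_i\wedge e_j)$ of the ambient sectional curvatures of the mixed $2$-planes, so applying the Gauss equation \eqref{eq:gauss curvature} to each pair $(e_i,e_j)$ with $i\le n_1<j$ and summing gives
\begin{align*}
\tilde\tau(T_pM)-\tilde\tau(T_pM_1)-\tilde\tau(T_pM_2)=\sum_{i\le n_1<j}K(e_i\wedge e_j)+\sum_{i\le n_1<j}\norm{\h(e_i,e_j)}^2-\sum_{i\le n_1<j}\langle\h(e_i,e_i),\h(e_j,e_j)\rangle .
\end{align*}
Thus \eqref{eq:chen-tripathineq} will follow once we (a) identify $\sum_{i\le n_1<j}K(e_i\wedge e_j)$ with the left side of \eqref{eq:chen-tripathineq}, (b) drop the nonnegative term $\sum_{i\le n_1<j}\norm{\h(e_i,e_j)}^2$, and (c) prove $\sum_{i\le n_1<j}\langle\h(e_i,e_i),\h(e_j,e_j)\rangle\le\tfrac{n^2}{4}\norm{H}^2$.

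Step (a) is the point where the doubly warped structure enters. Using the connection formulas for the canonical distributions $\di_1,\di_2$ of $M=\m$ (as in \cite{faghfouri:ondoublyimmersion,olteanu:generalinequalityfordoublywarped}), the sectional curvature of a mixed plane spanned by unit $X\in\di_1$ and $V\in\di_2$ is controlled by the Hessians of $\rho_1$ on $M_1$ and of $\rho_2$ on $M_2$; summing this over the adapted basis and invoking the definition \eqref{eq:laplac} of the Laplacian yields $\sum_{i\le n_1<j}K(e_i\wedge e_j)=n_2\tfrac{\Delta_1\rho_1}{\rho_1}+n_1\tfrac{\Delta_2\rho_2}{\rho_2}$, the exact analogue of the identity $\sum_{i\le n_1<j}K(e_i\wedge e_j)=n_2\tfrac{\Delta_1\rho}{\rho}$ that drives Tripathi's proof in the singly warped case. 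For step (c), expand in the normal frame:
\begin{align*}
\sum_{i\le n_1<j}\langle\h(e_i,e_i),\h(e_j,e_j)\rangle=\sum_{r=n+1}^{m}\Bigl(\sum_{i=1}^{n_1}h^r_{ii}\Bigr)\Bigl(\sum_{j=n_1+1}^{n}h^r_{jj}\Bigr).
\end{align*}
For $r=n+1$, write $A=\sum_{i\le n_1}h^{n+1}_{ii}$ and $B=\sum_{j>n_1}h^{n+1}_{jj}$, so that $A+B=n\norm{H}$ and $AB\le\tfrac14(A+B)^2=\tfrac{n^2}{4}\norm{H}^2$; for $r>n+1$ one has $\sum_{i=1}^n h^r_{ii}=0$, hence that summand equals $-\bigl(\sum_{i\le n_1}h^r_{ii}\bigr)^2\le 0$. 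Adding, $\sum_{i\le n_1<j}\langle\h(e_i,e_i),\h(e_j,e_j)\rangle\le\tfrac{n^2}{4}\norm{H}^2$, and substituting (a)--(c) into the displayed identity and rearranging gives \eqref{eq:chen-tripathineq}.

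For the equality discussion, equality in \eqref{eq:chen-tripathineq} at $p$ forces equality in every estimate used: vanishing of $\sum_{i\le n_1<j}\norm{\h(e_i,e_j)}^2$ says $\h(\di_1,\di_2)=0$, i.e.\ $\mathbf{x}$ is mixed totally geodesic at $p$; $A=B$ gives $n_1\langle H_1,e_{n+1}\rangle=n_2\langle H_2,e_{n+1}\rangle$; and vanishing of the $r>n+1$ terms gives $\sum_{i\le n_1}h^r_{ii}=0$, whence also $\sum_{j>n_1}h^r_{jj}=0$, i.e.\ $n_1\langle H_1,e_r\rangle=n_2\langle H_2,e_r\rangle=0$. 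Together these say $n_1H_1=n_2H_2$ at $p$; the converse is an immediate back-substitution. (If $H(p)=0$ the same computation works with any normal frame, the bound in (c) then reading $\le 0=\tfrac{n^2}{4}\norm{H}^2$.) Applying this at every point gives the ``holds identically'' part of the statement.

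The main obstacle is step (a): one must write out the O'Neill-type connection and curvature formulas for a \emph{genuine} doubly warped product $\m$ and keep careful track of which Levi-Civita connection --- that of $g_1$, of $g_2$, or of the warped metric $g$ --- each Hessian and Laplacian refers to, and of the rescaling between a $g$-orthonormal frame of $\di_i$ and a $g_i$-orthonormal one, since the two warping functions now interact. Once that identity is in hand, the remainder is a routine transcription of Chen's argument, the one extra bookkeeping point being the splitting of $\sum\langle\h(e_i,e_i),\h(e_j,e_j)\rangle$ over the two index blocks in step (c).
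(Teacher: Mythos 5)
Your proof is correct, and it reaches the same inequality and the same equality characterization as the paper, but it packages the key algebraic step differently. The paper follows the Chen $\delta$-invariant template: it introduces the auxiliary quantity $\delta$ via $2\delta=4\tau(p)-4\tilde\tau(T_pM)-n^2\norm{H}^2$, applies Chen's Lemma to the three block-traces $a_1=\sigma_{11}^{n+1}$, $a_2=\sum_{i=2}^{n_1}\sigma_{ii}^{n+1}$, $a_3=\sum_{t>n_1}\sigma_{tt}^{n+1}$, and only then specializes to the mixed-plane sum; you instead isolate the cross-block quantity $\sum_{i\le n_1<j}\langle\h(e_i,e_i),\h(e_j,e_j)\rangle$ from the start and bound it directly by $AB\le\frac14(A+B)^2$ in the $H$-direction together with $-(\sum_{i\le n_1}h^r_{ii})^2\le 0$ in the remaining normal directions. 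These are in fact the same inequality in disguise (Chen's Lemma for $l=3$ with that grouping reduces to $((a_1+a_2)-a_3)^2\ge 0$), so your route is an equivalent but more transparent derivation that dispenses with $\delta$ and with Chen's Lemma as a black box, and it makes the equality conditions (mixed total geodesy, $A=B$ in the $H$-direction, vanishing block-traces in the other normal directions, hence $n_1H_1=n_2H_2$) drop out immediately; the paper's version has the advantage of fitting the standard $\delta$-invariant machinery used throughout that literature. Both proofs lean identically on the one genuinely geometric input, Olteanu's mixed sectional curvature formula for doubly warped products, which you, like the paper, cite rather than rederive; your closing remark about the rescaling between $g$-orthonormal and $g_i$-orthonormal frames is a fair caveat that the paper itself glosses over. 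Your parenthetical treatment of the case $H(p)=0$ is a point of care the paper omits.
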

\begin{proof}
We choose a local orthonormal frame $\{e_1,\ldots,e_{n_1},e_{n_1+1},\ldots,e_m\}$, such that $e_1,\ldots,e_{n_1}$ are tangent to $M_1$, $e_{n_1+1},\ldots,e_n$ are tangent to $M_2$ and $e_{n+1}$ is parallel to mean curvature vector $H$.
We put $\sigma_{ij}^r=\langle\sigma(e_i,e_j),e_r\rangle, \quad i,j\in\{1, \ldots,n\}, r\in\{n+1,...,m\}$ where $n=n_1+n_2$ and \begin{align}
\norm{\sigma}^2&=\sum_{i,j=1}^n\langle\sigma(e_i,e_j),\sigma(e_i,e_j)\rangle\nonumber\\
&=\sum_{i,j=1}^n\sum_{r=n+1}^m(\sigma_{ij}^r)^2.
\end{align}
In view of the equation \eqref{eq:gauss curvature}, we have
\begin{align}\label{eq:10}
K(e_{i}\wedge e_{j})=\~{K}(e_{i}\wedge e_{j})+\sum_{r=n+1}^{m}(\sigma_{ii}^{r}\sigma_{jj}^{r}-(\sigma_{ij}^{r})^{2})
\end{align}
From \eqref{eq:10}, we have
\begin{align}\label{eq:11}
2\tau(p)=n^2\norm{H}^2-\norm{\sigma}^2+2\~\tau(T_pM).
\end{align}
We set
\begin{align}\label{2.8}
2\delta=4\tau(p)-4\~\tau(T_pM)-n^2\norm{H}^2.
\end{align}
The equation \eqref{2.8} can be written as
\begin{align}
n^2\norm{H}^2=2(\delta+\norm{\sigma}^2).\label{eq:12}
\end{align}
For the chosen locally orthonormal frame, we have
\begin{align*}
n^2\norm{H}^2&=\langle n H,n H\rangle=\langle n\norm{H}e_{n+1},\sum_{i=1}^n\sum_{r=n+1}^m\sigma_{ii}^re_r\rangle\\
&=n\norm{H}\sum_{i=1}^n\sigma_{ii}^{n+1},
\end{align*}
$$n\norm{H}=\sum_{i=1}^n\sigma_{ii}^{n+1}$$
\begin{align*}
(\sum_{i=1}^n\sigma_{ii}^{n+1})^2&=2(\delta+\sum_{i=1}^n(\sigma_{ii}^{n+1})^2+\sum_{i\neq j}^n(\sigma_{ij}^{n+1})^2+\sum_{i,j=1}^n\sum_{r=n+2}^m(\sigma_{ij}^r)^2)\\
\end{align*}
If we put $a_1=\sigma_{11}^{n+1}, a_2=\sum_{i=2}^{n_1}\sigma_{ii}^{n+1}$ and $a_3=\sum_{t=n_1+1}^{n}\sigma_{tt}^{n+1}$, then
\begin{align}
(\sum_{i=1}^3a_i)^2=2(\sum_{i=1}^3a_i^2+b)\label{eq:chen}
\end{align}
where
\begin{align*}
b=\delta+&\sum_{1\leq i\neq j\leq n}(\sigma_{ij}^{n+1})^2-\sum_{2\leq j\neq k\leq n_1}\sigma_{jj}^{n+1}\sigma_{kk}^{n+1}\\
-&\sum_{n_1+1\leq s\neq t\leq n}\sigma_{tt}^{n+1}\sigma_{ss}^{n+1}+\sum_{i,j=1}^n\sum_{r=n+2}^m(\sigma_{ij}^r)^2
\end{align*}
Applying Chen's Lemma for $n=3$
we get $b\leq2a_1a_2$, with equality holding if and only if $a_1+a_2=a_3$. Equivalently, we get
\begin{align}\label{2.12}
\frac{\delta}{2}+\sum_{1\leq i<j\leq n}(\sigma_{ij}^{n+1})^2+\frac{1}{2}\sum_{i,j=1}^n\sum_{r=n+2}^m(\sigma_{ij}^r)^2\leq\sum_{1\leq j<k\leq n_1}\sigma_{jj}^{n+1}\sigma_{kk}^{n+1}+\sum_{n_1+1\leq s< t\leq n}\sigma_{ss}^{n+1}\sigma_{tt}^{n+1}
\end{align}
with equality holding if and only if
\begin{align}\label{2.13}
\sum_{i=1}^{n_1}\sigma_{ii}^{n+1}=\sum_{s=n_1+1}^{n}\sigma_{ss}^{n+1}
\end{align}
In \cite{olteanu:generalinequalityfordoublywarped}, Olteanu  for doubly warped product manifolds proved that  for unit vector fields $X\in\di_1$ and $Z\in\di_2$ we have:
\begin{align} \label{eq:k}
K(X\wedge Z)=\frac{1}{\ff_1}((\co^1_XX)\ff_1-X^2\ff_1)+\frac{1}{\ff_2}((\co^2_ZZ)\ff_2-Z^2\ff_2).
\end{align}
For the chosen locally orthonormal frame and \eqref{eq:k}, we have
\begin{align}\label{eq:19}
n_2\dfrac{\Delta_1\rho_1}{\rho_1}+n_1\dfrac{\Delta_2\ff_2}{\ff_2}=\sum_{1\leq i\leq  n_{1}<j\leq n}
{K}(e_{i}\wedge e_{j}).
\end{align}
From equation \eqref{eq:19} we get
\begin{align}
n_2\dfrac{\Delta_1\rho_1}{\rho_1}+n_1\dfrac{\Delta_2\ff_2}{\ff_2}=\tau(p)-\tau(T_pM_1)-\tau(T_pM_2).
\end{align}
Using the Gauss equation \eqref{eq:gauss curvature}, we have
\begin{align}
n_2\dfrac{\Delta_1\rho_1}{\rho_1}+n_1\dfrac{\Delta_2\ff_2}{\ff_2}=&\tau(p)-\~\tau(T_pM_1)
-\sum_{r=n+1}^m\sum_{1\leq j<i\leq n_1}(\sigma_{jj}^r\sigma_{ii}^r-(\sigma_{ji}^r)^2)\label{2.14}\\
&-\~\tau(T_pM_2)-\sum_{r=n+1}^m\sum_{n_1+1\leq s<t\leq n}(\sigma_{ss}^r\sigma_{tt}^r-(\sigma_{st}^r)^2).\nonumber
\end{align}
From \eqref{2.8} we get
\begin{align}
n_2\dfrac{\Delta_1\rho_1}{\rho_1}+n_1\dfrac{\Delta_2\ff_2}{\ff_2}=&\dfrac{n^{2}}{4}\|H\|^{2}+\frac{\delta}{2}-\~\tau(T_pM_1)
-\sum_{r=n+1}^m\sum_{1\leq j<i\leq n_1}(\sigma_{jj}^r\sigma_{ii}^r-(\sigma_{ji}^r)^2)\label{eq:bala}\\
&-\~\tau(T_pM_2)-\sum_{r=n+1}^m\sum_{n_1+1\leq s<t\leq n}(\sigma_{ss}^r\sigma_{tt}^r-(\sigma_{st}^r)^2).\nonumber
\end{align}
In view of \eqref{2.12} and \eqref{eq:bala} we get
\begin{align}
n_2\dfrac{\Delta_1\rho_1}{\rho_1}+n_1\dfrac{\Delta_2\ff_2}{\ff_2}\leq&\dfrac{n^{2}}{4}\|H\|^{2}+\tilde{\tau}(T_{p}M)-\tilde{\tau}(T_{p}M_{1})-\tilde{\tau}(T_{p}M_{2})\nonumber\\
&-\sum_{r=n+2}^m\sum_{n_1+1\leq s<t\leq n}(\sigma_{ss}^r\sigma_{tt}^r-(\sigma_{st}^r)^2)\\
&-\sum_{r=n+2}^m\sum_{1\leq j<i\leq n_1}(\sigma_{jj}^r\sigma_{ii}^r-(\sigma_{ji}^r)^2)\nonumber\\
&-\sum_{j=1}^{n_1}\sum_{t=n_1+1}^n(\sigma_{jt}^{n+1})^2-\frac{1}{2}\sum_{r=n+2}^{m}\sum_{i,j=1}^n(\sigma_{ij}^{r})^2,\nonumber
\end{align}
or
\begin{align}\label{2.15}
n_2\dfrac{\Delta_1\rho_1}{\rho_1}+n_1\dfrac{\Delta_2\ff_2}{\ff_2}\leq&\dfrac{n^{2}}{4}\|H\|^{2}+\tilde{\tau}(T_{p}M)-\tilde{\tau}(T_{p}M_{1})-\tilde{\tau}(T_{p}M_{2})\\
&-\sum_{r=n+1}^m\sum_{j=1}^{n_1}\sum_{t=n_1+1}^n(\sigma_{tj}^r)^2-\frac{1}{2}\sum_{r=n+2}^m\left(\sum_{j=1}^{n_1}\sigma_{jj}^r\right)^2\nonumber\\
&-\frac{1}{2}\sum_{r=n+2}^m\left(\sum_{t=n_1+1}^{n}\sigma_{tt}^r\right)^2,\nonumber
\end{align}
which implies the inequality \eqref{eq:chen-tripathineq}.

The equality holds in  \eqref{2.15} if and only if
\begin{align}\label{2.16}
\sigma_{jt}^r=0, \quad 1\leq j\leq n_1, n_1+1\leq t\leq n, n+1\leq r\leq m,
\end{align}
and
\begin{align}\label{2.17}
\sum_{i=1}^{n_1}\sigma_{ii}^r=0=\sum_{t=n_1+1}^{n}\sigma_{tt}^r, \quad n+2\leq r\leq m.
\end{align}
Obviously \eqref{2.16} is true if and only if the doubly warped product $\m$ is mixed totally geodesic. From the equations \eqref{2.13} and \eqref{2.17} it follows that $n_1H_1=n_2H_2$.

The converse statement is straightforward.
\end{proof}
\section{generalized $(\kappa,\mu)$-space forms} \label{generalized}
A $(2m + 1)$-dimensional differentiable manifold $\~M$ is called an almost contact
metric manifold if there is an almost contact metric structure $(\phi,\xi,\eta,g)$
consisting of a $(1, 1)$ tensor field $\phi$, a vector field $\xi$, a 1-form $\eta$ and a compatible
Riemannian metric $g$ satisfying
\begin{gather}
\phi^2=-I+\eta\otimes\xi, \eta(\xi)=1, \phi\xi=0, \eta o\phi=0,\\
g(X,Y)=g(\phi x,\phi Y)+\eta(X)\eta(Y),\\
g(X,\phi y)=-g(\phi X,Y), g(X,\xi)=\eta(X),
\end{gather}
for all $X,Y\in\Gamma(T\~M)$.
An almost contact metric structure becomes a contact metric structure if $d\eta=\Phi$, where $\Phi(X,Y)=g(X,\phi Y)$ is the fundamental 2-form of $\~M$.

An almost contact metric structure of $\~M$ is said
to be normal if the Nijenhuis torsion $[\phi,\phi]$ of $\phi$ equals $-2d\eta\otimes\xi$. A normal
contact metric manifold is called a Sasakian manifold. It can be proved that
an almost contact metric manifold is Sasakian if and only if
\begin{align}
(\co_X\phi)Y=g(X,Y)\xi-\eta(Y)X,
\end{align}
for any $X, Y\in \Gamma(T\~M)$ or  equivalently, a contact metric structure is a Sasakian structure if and only if $\~R$ satisfies
\begin{align}
\~R(X,Y)\xi=\eta(Y)X-\eta(X)Y,
\end{align}
for $X,Y\in\Gamma(T\~M)$.
In a contact metric manifold $\~M$ , the $(1, 1)$-tensor field $h$ is  defined by $2h =\mathcal{L}_\xi\phi$, which is the Lie derivative of $\phi$ in the characteristic direction $\phi$. It is
symmetric and satisfies
\begin{gather}
h\xi=0,\quad h\phi+\phi h=0,\\
\~\co\xi=-\phi-\phi h,\quad  \tr(h)=\tr(\phi h)=0,
\end{gather}
where $\~\co$ is Levi-Civita connection.

Given an almost contact metric manifold $(\phi,\xi,\eta,g)$, a $\phi$-section of $M$ at $p\in M$ is a section $\p\subset T_p\~M$ spanned by a unit vector $X_p$ orthogonal to $\xi_p$, and $\phi X_p$. The $\phi$-sectional curvature of $\p$ is defined by $\~K(X,\phi X)=\~R(X,\phi X,\phi X,X)$. A Sasakian manifold with constant $\phi$-sectional curvature $c$
is called a Sasakian space form and is denoted by
$\~M(c).$
A contact metric manifold $(\~M,\phi,\xi,\eta,g)$ is said to be a $(\kappa,\mu)$-contact manifold if its curvature tensor satisfies the condition
\begin{align}
\~R(X,Y)\xi=\kappa(\eta(Y)X-\eta(X)Y)+\mu(\eta(Y)hX-\eta(X)hY),
\end{align}
where  $\kappa$ and $\mu$ are real constant numbers.
If the $(\kappa,\mu)$-contact metric manifold $\~M$ has constant $\phi$-sectional curvature $c$, then it is said  to be a $(\kappa,\mu)$-contact space form.
\begin{defi}[\cite{carriazo.Tripathi:Generalizedspaceforms}]
We say that an almost contact metric manifold $(\~M,\phi,\xi,\eta,g)$ is a generalized $(\kappa,\mu)$-space form if there exist functions $f_1, f_2, f_3,f_4,f_5,f_6$ defined on $M$ such that
\begin{align}\label{eq:km-space}
R=f_1R_1+f_2R_2+f_3R_3+f_4R_4+f_5R_5+f_6R_6,
\end{align}
where $R_1, R_2, R_3, R_4, R_5, R_6$ are the following tensors
\begin{align*}
&R_1(X,Y)Z=g(Y,Z)X-g(X,Z)Y,\\
&R_2(X,Y)Z=g(X,\phi Z)\phi Y-g(Y,\phi Z)\phi X+2g(X,\phi Y)\phi Z,\\
&R_3(X,Y)Z=\eta(X)\eta(Z)Y-\eta(Y)\eta(Z)X+g(X,Z)\eta(Y)\xi-g(Y,Z)\eta(X)\xi,\\
&R_4(X,Y)Z=g(Y,Z)hX-g(X,Z)hY+g(hY,Z)X-g(hX,Z)Y,\\
&R_5(X,Y)Z=g(hY,Z)hX-g(hX,Z)hY+g(\phi hX, Z)\phi hY-g(\phi hY,Z)\phi hX,\\
&R_6(X,Y)Z=\eta(X)\eta(Z)hY-\eta(Y)\eta(Z)hX+g(hX,Z)\eta(Y)\xi-g(hY,Z)\eta(X)\xi.
\end{align*}
for all vector fields $X, Y, Z $ on $\~M$, where $2h=L_\xi \phi$ and $L$  is the usual Lie derivative. We will denote such a manifold by $\~M(f_1,\ldots,f_6)$.
\end{defi}
$(\kappa,\mu)-$space forms are examples of generalized $(\kappa,\mu)$-space forms, with constant functions
$$f_1=\frac{c+3}{4}, f_2=\frac{c-1}{4}, f_3=\frac{c+3}{4}-\kappa, f_4=1, f_5=\frac{1}{2}, f_6=1-\mu. $$
Generalized Sasakian space forms $\~M(f_1,f_2,f_3)$ introduced in \cite{alegre.blair:GeneralizedSasakianspaceforms} are generalized $(\kappa,\mu)$-space forms, with $f_4=f_5=f_6=0.$

\begin{defi}[\cite{carriazo:Generalizeddivided}]
We say that an almost contact metric manifold $(\~M,\phi,\xi,\eta,g)$ is a generalized $(\kappa,\mu)$-space form with divided $R_5$ if there exist function $f_1, f_2, f_3,$ $ f_4, f_{5,1}, f_{5,2}, f_6$ defined on $M$ such that
\begin{align}\label{eq:km-spacedvid}
R=f_1R_1+f_2R_2+f_3R_3+f_4R_4+f_{5,1}R_{5,1}+f_{5,2}R_{5,2}+f_6R_6,
\end{align}
where $R_{5,1}, R_{5,2}$ are the following tensors
\begin{align*}
&R_{5,1}(X,Y)Z=g(hY,Z)hX-g(hX,Z)hY,\\
&R_{5,2}(X,Y)Z=g(\phi hY, Z)\phi hX-g(\phi hX,Z)\phi hY,
\end{align*}
for all vector fields $X, Y, Z $ on $\~M$, where $2h=L_\xi \phi$ and $L$  is the usual Lie derivative. we will denote such a manifold by $\~M(f_1,f_2,f_3,f_4,f_{5,1},f_{5,2},f_6)$.
\end{defi}
It follows that $R_5 = R_{5,1}-R_{5,2}$.  It is obvious that, if $\~M(f_1,\ldots,f_6)$  is  a generalized $(\kappa,\mu)$-space form then $\~M$ is a generalized $(\kappa,\mu)$-space form with divided $R_5$ with $f_{5,1}=f_5$ and $f_{5,2}=-f_5$.
A non-Sasakian $(\kappa,\mu)$-space form is the generalized $(\kappa,\mu)$-space form with divided $R_5$ with $$f_1=\frac{2-\mu}{2}, f_2=-\frac{\mu}{2},f_3=\frac{2-\mu-2\kappa}{2},f_4=1, f_{5,1}=\frac{2-\mu}{2(1-\kappa)}, f_{5,2}=\frac{2\kappa-\mu}{2(1-\kappa)}$$ and $f_6=1-\mu$ but not the generalized $(\kappa,\mu)$-space form.

A submanifold $M$ in a contact manifold is called a C-totally real submanifold if every tangent vector of $M$ belongs to the contact distribution \cite{yamaguchi:c-totallyrealsubmanifolds}.
Thus, a submanifold $M$ in a contact metric manifold is a C-totally real submanifold if $\xi$ is normal to $M$. A submanifold $M$ in an almost contact metric
manifold is called anti-invariant  if $\phi(TM)\subset T^\bot(M)$ \cite{yano:Antiinvariantsubmanifolds}. If a submanifold
$M$ in a contact metric manifold is normal to the structure vector field $\xi$,
then it is anti-invariant. Thus C-totally real submanifolds in a contact
metric manifold are anti-invariant, as they are normal to $\xi$

For a C-totally real submanifold in a contact metric manifold we have
$$\langle A_\xi X,Y\rangle=-\langle\~\co_X\xi,Y\rangle=\langle\phi X+\phi hX,Y\rangle,$$ which implies that
\begin{align}\label{4.1}
A_\xi=(\phi h)^T
\end{align}
where $(\phi h)^T$ is the tangential part of $\phi h X$ for all $X\in\Gamma(TM)$.
Now, we obtain a basic inequality involving the Laplacian of the warping
function and the squared mean curvature of a C-totally real warped product
submanifold of a $(\kappa,\mu)$-space form.

\begin{theorem}
Let $M=\m$ be an $n$-dimensional $C$-totally real doubly warped product submanifold of a $(2m+1)$-dimensional generalized $(\kappa,\mu)$-space form with divided $R_5$  $\~M(f_1,\ldots,f_6)$. Then
\begin{align}\label{eq:dwin}
n_2\dfrac{\Delta_1\rho_1}{\rho_1}+n_1\dfrac{\Delta_2\ff_2}{\ff_2}\leq\f{n^2}{4}&\norm{H}^2 +n_1n_2f_1\nonumber\\
&+f_4\bigg(n_2\tr(h^T_{|M_1})+n_1\tr(h^T_{|M_2})\bigg)\nonumber\\
&+\f{1}{2}f_{5,1}\bigg((\tr(h^T))^2-(\tr(h^T_{|M_1}))^2-(\tr(h^T_{|M_2}))^2\\
&\qquad-\norm{h^T}^2+\norm{h^T_{|M_1}}^2+\norm{h^T_{|M_2}}^2\bigg)\nonumber\\
&-\f{1}{2}f_{5,2}\bigg((\tr(A_\xi))^2-(\tr(A_{\xi|M_1}))^2-(\tr(A_{\xi|M_2}))^2\nonumber\\
&\qquad-\norm{A_\xi}^2+\norm{A_{\xi|M_1}}^2+\norm{A_{\xi|M_2}}^2\bigg)\nonumber
\end{align}
where $n_i=\dim M_i, n=n_1+n_2 $ and $\Delta_i$ is the Laplacian of $M_i, i=1,2.$ Equality holds in \eqref{eq:dwin} identically if and only if $M$ is mixed totally geodesic and $n_1H_1=n_2H_2,$ where $H_i, i=1,2$, are the partial mean curvature vectors.
\end{theorem}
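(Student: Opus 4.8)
The plan is to combine the general inequality \eqref{eq:chen-tripathineq} with an explicit evaluation of the ambient curvature quantity $\tilde\tau(T_pM)-\tilde\tau(T_pM_1)-\tilde\tau(T_pM_2)$ for a $C$-totally real submanifold of $\tilde M(f_1,f_2,f_3,f_4,f_{5,1},f_{5,2},f_6)$. Fix $p\in M$ and an orthonormal frame $\{e_1,\dots,e_n\}$ with $e_1,\dots,e_{n_1}$ tangent to $M_1$ and $e_{n_1+1},\dots,e_n$ tangent to $M_2$. Since $\mathbf x$ is $C$-totally real, $\xi$ is normal to $M$, so $\eta(e_i)=g(e_i,\xi)=0$ for every $i$; and since such a submanifold is anti-invariant, $\phi(T_pM)\subset T^\perp M$, so $g(e_i,\phi e_j)=0$ for all $i,j$. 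Substituting the expansion \eqref{eq:km-spacedvid} of $\tilde R$ into $\langle\tilde R(e_i,e_j)e_j,e_i\rangle$ and using these two observations, every term coming from $R_2$, $R_3$ and $R_6$ contains a vanishing factor, so only $R_1$, $R_4$, $R_{5,1}$ and $R_{5,2}$ contribute.

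Next I would compute the four surviving sums, both over $T_pM$ and over each $T_pM_i$. For $R_1$ one has $\langle R_1(e_i,e_j)e_j,e_i\rangle=1$ for $i\neq j$, so its share of $\tilde\tau(T_pM)-\tilde\tau(T_pM_1)-\tilde\tau(T_pM_2)$ equals $f_1\big(\binom{n}{2}-\binom{n_1}{2}-\binom{n_2}{2}\big)=n_1n_2f_1$. For $R_4$ one gets $\langle R_4(e_i,e_j)e_j,e_i\rangle=\langle h^Te_i,e_i\rangle+\langle h^Te_j,e_j\rangle$, where $h^T$ is the tangential part of $h$; summing and using $\tr(h^T)=\tr(h^T_{|M_1})+\tr(h^T_{|M_2})$ collapses the $R_4$-share to $f_4\big(n_2\tr(h^T_{|M_1})+n_1\tr(h^T_{|M_2})\big)$. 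For $R_{5,1}$ one finds $\langle R_{5,1}(e_i,e_j)e_j,e_i\rangle=\langle h^Te_i,e_i\rangle\langle h^Te_j,e_j\rangle-\langle h^Te_i,e_j\rangle^2$; summing over $i<j$, the diagonal contributions $\sum_i\langle h^Te_i,e_i\rangle^2$ cancel between $\frac{1}{2}\big((\tr h^T)^2-\sum_i\langle h^Te_i,e_i\rangle^2\big)$ and $\frac{1}{2}\big(\norm{h^T}^2-\sum_i\langle h^Te_i,e_i\rangle^2\big)$, leaving $\frac{1}{2}\big((\tr h^T)^2-\norm{h^T}^2\big)$; subtracting the analogous $M_1$- and $M_2$-sums reproduces exactly the $f_{5,1}$-bracket in \eqref{eq:dwin}. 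The term $R_{5,2}$ behaves in the same way once \eqref{4.1} is invoked: for $X,Y$ tangent to $M$ (hence also when restricted to each $M_i$) we have $g(\phi hX,Y)=\langle A_\xi X,Y\rangle$, so $\langle R_{5,2}(e_i,e_j)e_j,e_i\rangle=\langle A_\xi e_i,e_i\rangle\langle A_\xi e_j,e_j\rangle-\langle A_\xi e_i,e_j\rangle^2$ and the identical cancellation yields the $f_{5,2}$-bracket, now written in terms of $A_\xi$ rather than $h^T$.

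Adding these four shares gives a closed form for $\tilde\tau(T_pM)-\tilde\tau(T_pM_1)-\tilde\tau(T_pM_2)$, and plugging it into \eqref{eq:chen-tripathineq} gives \eqref{eq:dwin}. Because this final substitution is an exact identity and introduces no new inequality, the equality statement is inherited directly from that of \eqref{eq:chen-tripathineq}: equality holds at $p$ if and only if $M$ is mixed totally geodesic and $n_1H_1=n_2H_2$. I expect the only real work to be the bookkeeping of the second paragraph — checking that $R_2$, $R_3$, $R_6$ drop out, exploiting the symmetries $\langle h^Te_i,e_j\rangle=\langle h^Te_j,e_i\rangle$ and $\langle A_\xi e_i,e_j\rangle=\langle A_\xi e_j,e_i\rangle$ that make the diagonal-square terms cancel in the $R_{5,1}$ and $R_{5,2}$ sums, and carrying the substitution $A_\xi=(\phi h)^T$ correctly through the $M_1$/$M_2$ restrictions.
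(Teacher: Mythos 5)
Your proposal is correct and follows essentially the same route as the paper: it substitutes the curvature expansion of $\~M(f_1,f_2,f_3,f_4,f_{5,1},f_{5,2},f_6)$ into the ambient sectional curvatures, uses $\eta(e_i)=0$ and $g(e_i,\phi e_j)=0$ (with $A_\xi=(\phi h)^T$) to kill the $R_2$, $R_3$, $R_6$ contributions, computes $\tilde{\tau}(T_pM)-\tilde{\tau}(T_pM_1)-\tilde{\tau}(T_pM_2)$, and plugs the result into \eqref{eq:chen-tripathineq}, inheriting the equality case. The bookkeeping you describe matches the paper's formulas \eqref{4.3}--\eqref{4.6} exactly.
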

\begin{proof}
We choose a local orthonormal frame $\{e_1,\ldots,e_{n},e_{n+1},\ldots,e_{2m+1}\}$ such that $e_1,\ldots,e_{n_1}$ are tangent to $M_1$, $e_{n_1+1},\ldots,e_n$ are tangent to $M_2$ and $e_{n+1}$ is parallel to the mean curvature vector $H.$
Then from \eqref{eq:km-space} and
\eqref{4.1} we have
\begin{align}\label{4.3}
\~K(e_i\wedge e_j)=f_1&+f_4(g(h^Te_i,e_i)+g(h^Te_j,e_j))\nonumber\\
&+f_{5,1}(g(h^Te_i,e_i)g(h^Te_j,e_j)-g(h^Te_i,e_j)^2)\\
&-f_{5,2}(g(A_\xi e_i,e_i)g(A_\xi e_j,e_j)-g(A_\xi e_i,e_j)^2),\nonumber
\end{align}
where $h^TX$ is the tangential part of $hX$ for $X\in\Gamma(TM)$.
For a $k$-plane section $\p$ spanned by $\{e_1,\ldots,e_k\}$, from \eqref{4.3} it follows that
\begin{align}\label{4.4}
\~\tau(\p)=&\frac{k(k-1)}{2}f_1+(k-1)f_4\tr(h^T_{|\p})\\
&+\f{f_{5,1}}{2}\{\tr(h^T_{|\p})^2-\norm{h^T_{|\p}}^2)\}-\f{f_{5,2}}{2}\{\tr({A_\xi}_{|\p})^2-\norm{{A_\xi}_{|\p}}^2\}.\nonumber
\end{align}
From \eqref{4.4} we have
\begin{align}\label{4.5}
\~\tau(T_pM)=&\f{n(n-1)}{2}f_1+(n-1)f_4\tr(h^T)\\
&+\f{f_{5,1}}{2}\{\tr(h^T)^2-\norm{h^T}^2\}-\f{f_{5,2}}{2}\{\tr({A_\xi})^2-\norm{{A_\xi}}^2\},\nonumber
\end{align}
and
\begin{align}\label{4.6}
\~\tau(T_pM_i)=&\f{n_i(n_i-1)}{2}f_1+(n_i-1)f_4\tr(h^T_{|M_i})\nonumber\\
&+\f{f_{5,1}}{2}\{\tr(h^T_{|M_i})^2-\norm{h^T_{|M_i}}^2\}-\f{f_{5,2}}{2}\{\tr({A_\xi}_{|M_i})^2-\norm{{A_\xi}_{|M_i}}^2\},
\end{align}where
$i=1,2.$ By using \eqref{4.5} and \eqref{4.6} in \eqref{eq:chen-tripathineq} we get \eqref{eq:dwin}.
\end{proof}
\begin{corollary}\label{pro:4.3}
Let
$ \m $
be a doubly warped product of two Riemannian manifolds whose warping functions
$ \ff_{1} $ and $ \ff_{2} $
are harmonic functions. Then
\begin{enumerate}
\item
$ \m $
admits no  minimal C-totally real immersion into a $(2m+1)$-dimensional generalized $(\kappa,\mu)$-space form with divided $R_5$ $\~M(f_1,\ldots,f_6)$ with
\begin{align}\label{4.13}
0>& n_1n_2f_1 +f_4\bigg(n_2\tr(h^T_{|M_1})+n_1\tr(h^T_{|M_2})\bigg)\nonumber\\
&+\f{f_{5,1}}{2}\bigg((\tr(h^T))^2-(\tr(h^T_{|M_1}))^2-(\tr(h^T_{|M_2}))^2\nonumber\\
&\qquad-\norm{h^T}^2+\norm{h^T_{|M_1}}^2+\norm{h^T_{|M_2}}^2\\
&-\f{f_{5,2}}{2}\bigg(\tr(A_\xi))^2-(\tr(A_{\xi|M_1}))^2-(\tr(A_{\xi|M_2}))^2\nonumber\\
&\qquad-\norm{A_\xi}^2+\norm{A_{\xi|M_1}}^2+\norm{A_{\xi|M_2}}^2\bigg).\nonumber
\end{align}

%a hyperbolic space for any codimension; and
\item
every minimal C-totally real immersion  of $\m$ into a $(2m+1)$-dimensional generalized $(\kappa,\mu)$-space form with divided $R_5$  $\~M(f_1,\ldots,f_6)$ with
\begin{align}\label{4.14}
0=& n_1n_2f_1 +f_4\bigg(n_2\tr(h^T_{|M_1})+n_1\tr(h^T_{|M_2})\bigg)\nonumber\\
&+\f{f_{5,1}}{2}\bigg((\tr(h^T))^2-(\tr(h^T_{|M_1}))^2-(\tr(h^T_{|M_2}))^2\nonumber\\
&\qquad-\norm{h^T}^2+\norm{h^T_{|M_1}}^2+\norm{h^T_{|M_2}}^2\\
&-\f{f_{5,2}}{2}\bigg(\tr(A_\xi))^2-(\tr(A_{\xi|M_1}))^2-(\tr(A_{\xi|M_2}))^2\nonumber\\
&\qquad-\norm{A_\xi}^2+\norm{A_{\xi|M_1}}^2+\norm{A_{\xi|M_2}}^2\bigg)\nonumber
\end{align}
  is a mixed totally geodesic immersion.
\end{enumerate}
\end{corollary}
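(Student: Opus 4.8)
The plan is to derive both parts of the corollary as immediate consequences of the inequality \eqref{eq:dwin} established in the preceding theorem, specialized to the minimal case. First I would observe that if $\ff_1$ and $\ff_2$ are harmonic functions on $M_1$ and $M_2$ respectively, then $\Delta_1\ff_1=0$ and $\Delta_2\ff_2=0$, so the entire left-hand side of \eqref{eq:dwin} vanishes. If in addition the immersion is minimal, then $H=0$, so the term $\tfrac{n^2}{4}\norm{H}^2$ on the right-hand side also vanishes. Substituting these into \eqref{eq:dwin} leaves exactly the inequality
\begin{align*}
0\leq n_1n_2f_1 &+f_4\bigl(n_2\tr(h^T_{|M_1})+n_1\tr(h^T_{|M_2})\bigr)\\
&+\tfrac{1}{2}f_{5,1}\bigl((\tr(h^T))^2-(\tr(h^T_{|M_1}))^2-(\tr(h^T_{|M_2}))^2-\norm{h^T}^2+\norm{h^T_{|M_1}}^2+\norm{h^T_{|M_2}}^2\bigr)\\
&-\tfrac{1}{2}f_{5,2}\bigl((\tr(A_\xi))^2-(\tr(A_{\xi|M_1}))^2-(\tr(A_{\xi|M_2}))^2-\norm{A_\xi}^2+\norm{A_{\xi|M_1}}^2+\norm{A_{\xi|M_2}}^2\bigr),
\end{align*}
which is precisely the negation of the strict inequality \eqref{4.13}.

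For part (1), I would argue by contradiction: suppose such a minimal C-totally real immersion into $\~M(f_1,\ldots,f_6)$ exists with the curvature data satisfying the strict inequality \eqref{4.13}. Then the right-hand side of \eqref{eq:dwin}, under $H=0$ and harmonicity of the warping functions, would be strictly negative while the left-hand side is $0$, contradicting \eqref{eq:dwin}. Hence no such immersion can exist. For part (2), under the equality hypothesis \eqref{4.14} the right-hand side of \eqref{eq:dwin} equals exactly $0$ in the minimal, harmonic case, so \eqref{eq:dwin} becomes $0\leq 0$, i.e. equality holds in \eqref{eq:dwin} identically. By the equality clause of the theorem, this forces the immersion to be mixed totally geodesic (and $n_1H_1=n_2H_2$, which is automatic here since $H=0$). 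This gives the asserted conclusion.

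The only mild subtlety — not really an obstacle — is to make sure that "harmonic" is being used in the sign convention of \eqref{eq:laplac}, so that $\Delta_i\ff_i=0$ genuinely kills the left side of \eqref{eq:dwin}; this is consistent with Lemma \ref{lemma:hopf1} as stated. I would also note in passing that since $\ff_1,\ff_2>0$, the ratios $\Delta_1\ff_1/\ff_1$ and $\Delta_2\ff_2/\ff_2$ are well defined and vanish exactly when the numerators do. No further computation is needed: both parts follow by direct substitution into \eqref{eq:dwin} and an appeal to its equality case, so the proof is short.
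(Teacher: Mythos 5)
Your proposal is correct and follows essentially the same route as the paper: substitute the harmonicity of $\ff_1,\ff_2$ and the minimality $H=0$ into \eqref{eq:dwin} to obtain the non-strict inequality contradicting \eqref{4.13}, and invoke the equality clause of the theorem under \eqref{4.14} to conclude mixed total geodesy. No gaps; your remark on the sign convention of the Laplacian is a reasonable extra precaution but changes nothing.
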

\begin{proof}
Assume that
$ \phi:\m\to \tilde{M}$
is a C-totally real minimal immersion of a doubly warped product
$\m$
into a $(2m+1)$-dimensional generalized $(\kappa,\mu)$-space form with divided $R_5$  $\~M(f_1,\ldots,f_6)$.
%into a complete simply connected Riemannian manifold
%$ R^{m}(c) $
%of constant sectional curvature c.
 If
$ \ff_{1} $ and $ \ff_{2} $
are harmonic functions on
$ M_{1} $ and $ M_{2} $, respectively, then inequality \eqref{eq:dwin}
implies
\begin{align}
0\leq& n_1n_2f_1 +f_4\bigg(n_2\tr(h^T_{|M_1})+n_1\tr(h^T_{|M_2})\bigg)\nonumber\\
&+\f{f_{5,1}}{2}\bigg((\tr(h^T))^2-(\tr(h^T_{|M_1}))^2-(\tr(h^T_{|M_2}))^2\nonumber\\
&\qquad-\norm{h^T}^2+\norm{h^T_{|M_1}}^2+\norm{h^T_{|M_2}}^2\nonumber\\
&-\f{f_{5,2}}{2}\bigg((\tr(A_\xi))^2-(\tr(A_{\xi|M_1}))^2-(\tr(A_{\xi|M_2}))^2\nonumber\\
&\qquad-\norm{A_\xi}^2+\norm{A_{\xi|M_1}}^2+\norm{A_{\xi|M_2}}^2\bigg)\nonumber
\end{align}
on
the doubly warped product
$ \m $. This shows that $\m$
does not admit any C-totally real minimal immersion into $\~M$ with condition \eqref{4.13}.

To prove (2),
when
\eqref{4.14} is true, the minimality of $\m$ and the harmonicity  of $\ff_1$ and $\ff_2$ imply that
the equality in (\ref{eq:dwin}) holds identically. Thus, the immersion is mixed totally geodesic.
%Hence,
%$ \phi $
%is locally a doubly warped product immersion.
\end{proof}
\begin{corollary}
Let
$ \m $
be a doubly warped product of two Riemannian manifolds whose warping functions
$ \ff_{1} $ and $ \ff_{2} $
are harmonic functions and one of $M_i, i=1,2$ is compact. Then
every C-totally real minimal immersion from
$ \m$
into the generalized Sasakian space form $\mathbb{R}^{2m+1}(-3)$ is a warped product immersion.
\end{corollary}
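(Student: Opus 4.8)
The plan is to substitute the structure functions of $\mathbb{R}^{2m+1}(-3)$ into inequality \eqref{eq:dwin}, observe that the hypotheses force equality, and then extract the rigidity and combine it with compactness. First, a generalized Sasakian space form is a generalized $(\kappa,\mu)$-space form with $f_4=f_5=f_6=0$, hence one with divided $R_5$ with $f_{5,1}=f_5=0$ and $f_{5,2}=-f_5=0$; moreover $\mathbb{R}^{2m+1}(-3)$ is the Sasakian space form of constant $\phi$-sectional curvature $-3$, so also $f_1=\f{-3+3}{4}=0$. Consequently, on the right-hand side of \eqref{eq:dwin} every term involving $f_1,f_4,f_{5,1},f_{5,2}$ vanishes, and the minimality of the immersion kills the remaining term $\f{n^2}{4}\norm{H}^2$. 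Thus \eqref{eq:dwin} reduces to $n_2\f{\Delta_1\rho_1}{\rho_1}+n_1\f{\Delta_2\ff_2}{\ff_2}\le 0$.

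Since $\rho_1$ and $\ff_2$ are harmonic, the left-hand side is identically $0$, so equality holds in \eqref{eq:dwin}. By the equality case of \eqref{eq:dwin} the immersion is mixed totally geodesic and $n_1H_1=n_2H_2$; together with the identity $nH=n_1H_1+n_2H_2=0$ (from $H\equiv 0$) this forces $H_1=H_2=0$, so each factor is immersed with vanishing partial mean curvature. (Equivalently, the right-hand side of \eqref{4.14} vanishes for $\mathbb{R}^{2m+1}(-3)$, so Corollary \ref{pro:4.3}(2) already gives the mixed totally geodesic property.)

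Assume now that $M_1$ is the compact factor; the case in which $M_2$ is compact is symmetric. Then $\rho_1$ is a harmonic function on the compact Riemannian manifold $M_1$, hence constant by Lemma \ref{lemma:hopf1}. Absorbing this constant into the metric of $M_2$, the metric of $M$ becomes $\rho_2^2\,\pi_1^*g_1+\pi_2^*\bar g_2$, so $M$ is the singly warped product $(M_2,\bar g_2)\times_{\rho_2}(M_1,g_1)$, still realized as a mixed totally geodesic, minimal, C-totally real submanifold of $\mathbb{R}^{2m+1}(-3)$.

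It then remains to conclude that such an immersion is a warped product immersion, and this is the step I expect to be the main obstacle. I would appeal to the characterization of warped product isometric immersions (N{\"o}lker; see also the doubly warped variants in the literature on warped product immersions): a mixed totally geodesic isometric immersion of a warped product is, at least locally, a warped product immersion. The delicate point is checking that this characterization applies to the ambient $\mathbb{R}^{2m+1}(-3)$ with the extra structure at hand ($\xi$ normal to $M$, $H_1=H_2=0$); if it is not available directly, one first composes with the Boothby--Wang-type projection $\mathbb{R}^{2m+1}(-3)\to\mathbb{C}^m$, which converts our C-totally real minimal warped product submanifold into a totally real minimal warped product immersion into flat $\mathbb{C}^m=\mathbb{R}^{2m}$, applies N{\"o}lker's theorem there, and lifts the resulting splitting back.
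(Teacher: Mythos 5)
Your preliminary analysis is correct and matches the facts the paper relies on: for $\mathbb{R}^{2m+1}(-3)$ one has $f_1=(c+3)/4=0$ and $f_4=f_{5,1}=f_{5,2}=0$, minimality kills $\f{n^2}{4}\norm{H}^2$, harmonicity forces equality in \eqref{eq:dwin}, and the compact factor makes the corresponding warping function constant by Lemma \ref{lemma:hopf1}, so $\m$ collapses to a singly warped product (this last reduction is the first half of the paper's own proof). The computation $H=0$ together with $n_1H_1=n_2H_2$ giving $H_1=H_2=0$ is also fine.

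The genuine gap is exactly where you suspect it: passing from ``mixed totally geodesic C-totally real minimal immersion of a warped product into $\mathbb{R}^{2m+1}(-3)$'' to ``warped product immersion.'' N\"olker's splitting theorem is stated for isometric immersions into Riemannian space forms, and $\mathbb{R}^{2m+1}(-3)$ is \emph{not} one: it is $\mathbb{R}^{2m+1}$ carrying the standard Sasakian metric of constant $\phi$-sectional curvature $-3$, whose underlying Riemannian metric is not of constant curvature, so N\"olker does not apply directly. Your fallback --- projecting to $\mathbb{C}^m$, applying N\"olker there, and lifting back --- is the right idea in spirit, but it is not executed (the lift of the splitting back through the fibration is the nontrivial part), and it amounts to reproving the result the paper actually cites. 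The paper's route is shorter and avoids the equality-case analysis entirely: once $\ff_2$ (say) is constant, $\m$ is a warped product $\bar M_1\times_{\ff_1}M_2$ with $\ff_1$ still harmonic, and Theorem 5.2 of \cite{chen:on.isometric.minimal.immersions} states precisely that a C-totally real minimal immersion of a warped product with harmonic warping function into $\mathbb{R}^{2m+1}(-3)$ is a warped product immersion. So your proof is not wrong in what it asserts, but its decisive final step is left open; to close it you should either cite that theorem of Chen (at which point your equality-case work becomes redundant) or fully carry out the projection-and-lift argument.
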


\begin{proof}
Let $\phi:\m\to \mathbb{R}^{2m+1}(-3)$ be a C-totally real minimal immersion and $M_2$ be compact. Since $\ff_2$ is harmonic, by applying lemma \ref{lemma:hopf1} and the compactness of $M_2$, we know that $\ff_2$ is a positive constant. Therefore, the doubly warped product $\m$ can be expressed as a warped product $\bar{M}_1\times_{\ff_1} M_2$ where $\bar{M}_1=M_1$, equipped with the metric $\ff_2^2g_1$ which is homothetic to the original metric $g_1$ on $M_1$. Now,  Theorem 5.2  in  \cite{chen:on.isometric.minimal.immersions} implies that $\phi$ is a warped product immersion.
\end{proof}

\begin{pro}
If
$ \ff_{1} $ and $ \ff_{2} $ are eigenfunctions of the Laplacian on
$ M_{1} $ and $ M_{2} $ with eigenvalues
$ n_{1}\lambda $ and $ n_{2}\lambda, \lambda>0,$ (or with eigenvalues $\lambda$)   respectively. Then
$ \m $
does not admit a C-totally real minimal immersion
into a generalized $(\kappa,\mu)$-space form with divided $R_5$ $\~M(f_1,f_2,f_3,0,0,0,f_6)$ with non-positive $f_1$.
\end{pro}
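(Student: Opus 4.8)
The plan is to specialize inequality~\eqref{eq:dwin} of the preceding Theorem to the ambient manifold $\~M(f_1,f_2,f_3,0,0,0,f_6)$, in which $f_4=f_{5,1}=f_{5,2}=0$. On the right-hand side of \eqref{eq:dwin} every term other than $\f{n^2}{4}\norm{H}^2$ and $n_1n_2f_1$ is a multiple of $f_4$, $f_{5,1}$ or $f_{5,2}$, so for such a space form the inequality reduces to
\begin{align*}
n_2\dfrac{\Delta_1\ff_1}{\ff_1}+n_1\dfrac{\Delta_2\ff_2}{\ff_2}\leq\f{n^2}{4}\norm{H}^2+n_1n_2f_1 .
\end{align*}

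Next I would argue by contradiction: suppose $\m$ admits a C-totally real minimal immersion into $\~M(f_1,f_2,f_3,0,0,0,f_6)$. Minimality forces $H\equiv 0$, so at every point
\begin{align*}
n_2\dfrac{\Delta_1\ff_1}{\ff_1}+n_1\dfrac{\Delta_2\ff_2}{\ff_2}\leq n_1n_2f_1 .
\end{align*}
I would then feed in the eigenvalue hypotheses. If $\Delta_1\ff_1=n_1\lambda\,\ff_1$ and $\Delta_2\ff_2=n_2\lambda\,\ff_2$, then, $\ff_1$ and $\ff_2$ being positive, $\Delta_1\ff_1/\ff_1=n_1\lambda$ and $\Delta_2\ff_2/\ff_2=n_2\lambda$, so the left-hand side equals $2n_1n_2\lambda$ and we get $2\lambda\le f_1$. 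If instead $\Delta_1\ff_1=\lambda\ff_1$ and $\Delta_2\ff_2=\lambda\ff_2$, the left-hand side equals $(n_1+n_2)\lambda=n\lambda$ and we get $n\lambda\le n_1n_2f_1$. In both cases a strictly positive quantity is dominated by a non-positive one (since $f_1\le 0$ and $\lambda>0$), which is absurd; hence no such immersion exists.

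I do not anticipate any real difficulty here: the entire substance is contained in the Theorem, and what remains is a two-line specialization together with the substitutions $H=0$ and $\Delta_i\ff_i/\ff_i=(\text{eigenvalue})$. The only points worth a moment's care are checking that putting $f_4=f_{5,1}=f_{5,2}=0$ genuinely kills all of the $h^T$- and $A_\xi$-terms on the right of \eqref{eq:dwin} --- which is immediate, since each is proportional to one of these functions --- and respecting the paper's sign convention for the Laplacian ($\Delta\psi=-\tr(H^\psi)$, the nonnegative spectral Laplacian), so that the hypothesis $\lambda>0$ indeed makes the left-hand side of the inequality positive.
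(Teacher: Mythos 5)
Your proposal is correct and follows essentially the same route as the paper's own (very terse) proof: specialize \eqref{eq:dwin} to $f_4=f_{5,1}=f_{5,2}=0$, set $H=0$ by minimality, and substitute the eigenvalue hypotheses to obtain $2\lambda\le f_1$ (resp.\ $n\lambda\le n_1n_2f_1$), contradicting $f_1\le 0$ and $\lambda>0$. If anything, your write-up is more careful than the paper's, which compresses this to the somewhat garbled assertion ``$n_1n_2f_1\geq\lambda>0$.''
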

\begin{proof}
The inequality \eqref{eq:dwin} implies that
$ n_1n_2f_1\geq \lambda>0. $
which in  contraction to $f_1\leq0$.
\end{proof}

\begin{pro}
If $M_1$  is a compact Riemannian manifold and $\ff_2$ is a harmonic function on $M_2$, then
every doubly warped product $\m$  does not admit a C-totally real minimal immersion into the standard  generalized Sasakian space form $\~M=\mathbb{R}^{2m+1}(-3)$.
%\end{enumerate}
\end{pro}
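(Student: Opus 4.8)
The plan is to argue by contradiction: suppose $\phi\colon\m\to\mathbb{R}^{2m+1}(-3)$ is a C-totally real minimal immersion. Being a generalized Sasakian space form of $\phi$-sectional curvature $c=-3$, the ambient manifold is a generalized $(\kappa,\mu)$-space form with divided $R_5$ for which $f_1=\tfrac{c+3}{4}=0$, $f_2=f_3=-1$, $f_4=f_{5,1}=f_{5,2}=f_6=0$, and (it is Sasakian) $h=0$, so $h^T=0$ and $A_\xi=0$. Hence the theorem above applies and, since $\phi$ is minimal ($\norm{H}=0$) and $f_1=0$, inequality \eqref{eq:dwin} reduces to
\[
n_2\,\dfrac{\Delta_1\rho_1}{\rho_1}+n_1\,\dfrac{\Delta_2\rho_2}{\rho_2}\le 0
\]
at every point of $\m$.

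Next I would feed in the hypotheses. Harmonicity of $\rho_2$ gives $\Delta_2\rho_2\equiv0$, so the inequality becomes $\Delta_1\rho_1\le0$ everywhere on $M_1$ (recall $\rho_1>0$ and $n_2\ge1$). Since $M_1$ is compact, Hopf's Lemma (Lemma \ref{lemma:hopf2}) forces $\rho_1$ to be a positive constant. In particular $\rho_1$ is harmonic, so now \emph{both} warping functions of $\m$ are harmonic and $M_1$ is compact; hence the preceding Corollary (on warped product immersions into $\mathbb{R}^{2m+1}(-3)$) applies, and $\phi$ is a warped product immersion. As a Riemannian manifold $\m$ is now the warped product $\bigl(M_2,\rho_1^{\,2}g_2\bigr)\times_{\rho_2}M_1$, with compact fibre $M_1$ and harmonic warping function $\rho_2$. (As a by-product, with $\rho_1$ constant equality holds identically in \eqref{eq:dwin}, so by the equality statement of the theorem above $\phi$ is mixed totally geodesic with $n_1H_1=n_2H_2$, and together with $n_1H_1+n_2H_2=nH=0$ this gives $H_1=H_2=0$.)

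The last step — which I expect to be the crux — is to convert this rigidity into an impossibility, and here I would invoke the standard passage to Euclidean space. The canonical projection $\pi\colon\mathbb{R}^{2m+1}(-3)\to\mathbb{R}^{2m}$ restricts to a homothety on every integral submanifold of the contact distribution; since $M$ is C-totally real it is such an integral submanifold, so $\pi\circ\phi$ is, up to a constant scale, a minimal isometric immersion of $\bigl(M_2,\rho_1^{\,2}g_2\bigr)\times_{\rho_2}M_1$ into the real space form $\mathbb{R}^{2m}$, and it is again a warped product immersion. Because the warping function $\rho_2$ is harmonic, Chen's structure theorem for minimal warped product immersions into real space forms (the result of \cite{chen:on.isometric.minimal.immersions} already used for the preceding Corollary) forces this immersion to be a \emph{product} immersion $\psi_1\times\psi_2\colon\bigl(M_2,\rho_1^{\,2}g_2\bigr)\times M_1\to\mathbb{R}^{m_1}\times\mathbb{R}^{m_2}$ with each $\psi_i$ minimal in the corresponding Euclidean factor. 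In particular $\psi_2\colon M_1\to\mathbb{R}^{m_2}$ is a minimal immersion of the compact manifold $M_1$, which contradicts Lemma \ref{lemma:mini}; so no such $\phi$ exists. The delicate point is exactly this final reduction: one must know that harmonicity of $\rho_2$ upgrades the conclusion of the preceding Corollary from ``warped product immersion'' to ``product immersion'', so that the compactness of $M_1$ — now carrying the fibre rather than the base — can be played against Lemma \ref{lemma:mini}; checking this, together with the compatibility of $\pi$ with the warped-product structure, is the main obstacle (the argument is cleanest when $M_2$ is also compact, for then $\m$ itself is compact and $\pi\circ\phi$ already contradicts Lemma \ref{lemma:mini}).
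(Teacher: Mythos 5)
Your first half coincides with the paper's argument: with $f_1=0$ and $H=0$, inequality \eqref{eq:dwin} plus harmonicity of $\rho_2$ gives $\Delta_1\rho_1\le 0$, Hopf's Lemma \ref{lemma:hopf2} on the compact $M_1$ makes $\rho_1$ a positive constant, equality holds in \eqref{eq:dwin}, and the doubly warped product collapses to a single warped product whose \emph{fibre} is the compact factor $M_1$. Up to there you are reproducing the paper.

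The gap is exactly where you say it is. Your closing step rests on the claim that harmonicity of $\rho_2$ upgrades ``warped product immersion'' to ``product immersion $\psi_1\times\psi_2$ with each $\psi_i$ minimal,'' from which $\psi_2\colon M_1\to\mathbb{R}^{m_2}$ would violate Lemma \ref{lemma:mini}. That upgrade is not a theorem you can point to: a minimal warped product immersion of $N_1\times_f N_2$ into Euclidean space with harmonic non-constant $f$ need not split as a Riemannian product immersion (Chen's Theorem 5.2 only yields a \emph{warped} product immersion, which is what the preceding Corollary already used), so as written the contradiction does not close. The paper closes it differently: it invokes N\"olker's decomposition theorem to write $\phi=(\phi_1,\phi_2)$ as a warped product immersion and then cites Theorem 3.5 of \cite{faghfouri:ondoublyimmersion}, which says that minimality of $\phi$ forces the component $\phi_1\colon M_1\to\mathbb{R}^{n_1}$ of the \emph{compact} factor to be minimal in a Euclidean factor --- and that is what contradicts Lemma \ref{lemma:mini}. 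In other words, the fact you need is that for a warped product immersion the fibre component is automatically minimal when the total immersion is; you do not need (and cannot in general get) a genuine product splitting. Your remark that the case of compact $M_2$ is easy is correct but does not cover the statement, which assumes nothing about $M_2$. Replacing your ``structure theorem'' step by the N\"olker/component-minimality argument repairs the proof and makes it essentially the paper's.
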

\begin{proof}
Assume $M_1$ is compact and $\phi:\m\to \~M$ is a C-totally real minimal immersion of $\m$ into the standard  generalized Sasakian space form $\~M$ with  $f_1=0$. From harmonicity  of $\ff_2$ and inequality \eqref{eq:dwin} we have
$$\frac{\Delta_1 \ff_1}{\ff_1}\leq n_1n_2f_1=0.$$
Since the warping function $\ff_1$ is positive, we obtain $\Delta_1 \ff_1\leq0$. Hence. it follows from Hopf's lemma \ref{lemma:hopf2} that $\ff_1$ is a positive constant.

Hence the equality case of \eqref{eq:dwin} holds and  $\phi$ is mixed totally geodesic.
Since $\ff_1$ is a positive constant, then  doubly warped product $\m$  is a warped product of the Riemannian manifold $(M_1,g_1)$ and the Riemannian manifold $\bar{M}_2=(M_2,\ff_1^2g_2)$, that is ${M_{1}}_{\ff_2}\!\!\times\bar{M}_2$.
By applying a result of N\"{o}lker in \cite{nolker:Isometric.immersions.of.warped.products}, $\phi$  is warped product immersion, say
$$\phi=(\phi_1,\phi_2):{M_{1}}_{\ff_2}\!\!\times\bar{M}_2\to \~M=\mathbb{R}^{n_1}\times \mathbb{R}^{n_2}$$
By Theorem 3.5 in \cite{faghfouri:ondoublyimmersion}, $\phi$ is minimal, $\phi_1:M_1\to\mathbb{R}^{n_1}$ is  minimal since $\phi$ is minimal. This is impossible by Lemma \ref{lemma:mini} since $M_1$ is compact.

\end{proof}

%\begin{theorem}
%Let $M=\m$ be an $n$-dimensional $C$-totally real doubly warped product submanifold of a $(2m+1)$-dimensional$(m\geq2)$ non-Sasakian generalized  $(\kappa,\mu)$-space form $M(f_1,\ldots,f_6)$. Then
%\begin{align}\label{eq:dwinnosasaki}
%n_2\dfrac{\Delta_1\rho_1}{\rho_1}+n_1\dfrac{\Delta_2\ff_2}{\ff_2}\leq\f{n^2}{4}\norm{H}^2 &+n_1n_2f_1\\
%&+f_4\bigg(n_2\tr(h^T_{|M_1})-n_1\tr(h^T_{|M_2})\bigg)\\
%&+\f{1}{2}f_5\bigg((\tr(h^T))^2-(\tr(h^T_{|M_1}))^2-(\tr(h^T_{|M_2}))^2\\
%&-(\tr(A_\xi))^2+(\tr(A_{\xi|M_1}))^2+(\tr(A_{\xi|M_2}))^2\\
%&-\norm{h^T}^2+\norm{h^T_{|M_1}}^2+\norm{h^T_{|M_2}}^2\\
%&+\norm{A_\xi}^2-\norm{A_{\xi|M_1}}^2-\norm{A_{\xi|M_2}}^2\bigg)
%\end{align}
%where $n_i=\dim M_i, n=n_1+n_2 $ and $\Delta_i$ is the Laplacian of $M_i, i=1,2.$ Equality holds in \eqref{eq:dwin} identically if and only if $M$ is mixed totally geodesic and $n_1H_1=n_2H_2,$ where $H_i, i=1,2$, are the partial mean curvature vectors.
%\end{theorem}
%\cite{matsumoto:Warpedproductsubmanifolds}
%\section*{Acknowledgements }
%The authors is very grateful Professor Mukut Mani Tripathi for some
%valuable suggestions to improve the presentation of this paper.
%\nocite{*}
\bibliographystyle{acm}
\bibliography{F:/bibtex/convolution,F:/bibtex/faghfouri}

\end{document}